\newtheorem{prop}{Proposition}[section]
\newtheorem{thm}[prop]{Theorem}
\newtheorem{cor}[prop]{Corollary}
\newtheorem{lem}[prop]{Lemma}
\newtheorem{conj}[prop]{Conjecture}
\theoremstyle{definition}
\newtheorem{de}[prop]{Definition}
\newtheorem{examples}[prop]{Examples}
\theoremstyle{remark}
\def\zed{{\mathbb Z}}
\def\R{{\mathbb R}}
\def\RP{{\mathbb R \mathbb P}}
\def\int{\mathop{\rm int}\nolimits}
\def\id{\mathop{\rm id}\nolimits}
\def\GL{\mathop{\rm GL}\nolimits}
\def\SL{\mathop{\rm SL}\nolimits}
\def\SO{\mathop{\rm SO}\nolimits}
\def\det{\mathop{\rm det}\nolimits}
\def\tr{\mathop{\rm tr}\nolimits}
\begin{document}
\title{More Cappell-Shaneson spheres are standard}
\author{Robert E. Gompf}
\thanks{Partially supported by NSF grant DMS-0603958.}
\address{The University of Texas at Austin, Mathematics Department RLM 8.100, Attn: Robert Gompf,
2515 Speedway Stop C1200, Austin, Texas 78712-1202}
\email{gompf@math.utexas.edu}
\begin{abstract}
Akbulut has recently shown that an infinite family of Cappell-Shaneson homotopy 4-spheres is diffeomorphic to the standard 4-sphere. In the present paper, a different method shows that a strictly larger family is standard. This new approach uses no Kirby calculus except through the relatively simple 1979 paper of Akbulut and Kirby showing that the simplest example with untwisted framing is standard. Instead, hidden symmetries of the original Cappell-Shaneson construction are exploited. In the course of the proof, an example is given showing that Gluck twists can sometimes be undone using symmetries of fishtail neighborhoods.
\end{abstract}
\maketitle


\section{Introduction}

The smooth 4-dimensional Poincar\'e Conjecture, that every homotopy 4-sphere is diffeomorphic to the standard 4-sphere $S^4$, is perhaps the last of the great unsolved conjectures of classical manifold theory. It is widely believed to be false, largely because of the multitude of potential counterexamples. The best-known, and historically the most promising, family of potential counterexamples was constructed by Cappell and Shaneson in the 1970's \cite{CS}. These were indexed by an infinite family of matrices, together with a $\zed/2$ choice of a framing. The examples with the twisted choice of framing seemed the most intractable, hence, the most likely to be exotic $S^4$'s, especially since two of them were known to be double covers of exotic $\RP^4$'s \cite{CS2}. It took years of work with difficult Kirby calculus computations to show by the late 1980's that even the simplest of these was diffeomorphic to $S^4$ \cite{AK1}, \cite{AK2}, \cite{Sig0}. More Kirby calculus \cite{Sigm} yielded handle diagrams of an infinite family of such examples, published in 1991, but no further progress was made until June 2009, when Akbulut \cite{A} found a simple proof that these diagrams all represent diffeomorphic manifolds, so they are $S^4$ by previous results. In the present paper, we show that a strictly larger family is $S^4$. Our method may ultimately show that all Cappell-Shaneson homotopy spheres are $S^4$, suggesting the intriguing possibility that the smooth 4-dimensional Poincar\'e Conjecture may be true after all. Our main technique is new, exploiting hidden symmetries of the original construction of Cappell and Shaneson. That construction exhibits their examples as being precisely those homotopy 4-spheres admitting 2-knots fibered by punctured 3-tori, with monodromy given by the associated matrix. For any 2-torus in the fiber, a multiplicity-1 logarithmic transformation can also be described as a change in the monodromy, but by finding a suitable vanishing cycle, we can sometimes show that the diffeomorphism type is unchanged. This gives a new relation among the matrices, preserving the associated diffeomorphism types. No Kirby calculus is used in this paper, and the argument only depends on it through the relatively simple proof that appeared in \cite{AK1} in 1979.

As we will see, the Cappell-Shaneson homotopy 4-spheres are indexed by a $\zed/2$ framing choice and a conjugacy class of {\em  Cappell-Shaneson matrices} $A\in SL(3,\zed)$, defined by the condition $\det(A-I)=1$. The most thoroughly studied subfamily is given by the matrices
$$ A_m = \left[ \begin{array}{ccc} 0 & 1 & 0 \\ 0 & 1 & 1 \\ 1 & 0 & m+1 \end{array} \right].$$
These represent the simplest conjugacy class for each value of the trace. They form a ``cofinite'' subfamily in the sense that each trace is realized by only finitely many conjugacy classes of Cappell-Shaneson matrices, although the number of such classes appears to increase superlinearly with the trace. (See Section~\ref{CS} and \cite{AR} for further discussion.) The homotopy spheres corresponding to $m=0,4$ and the twisted framing arise as double covers of exotic $\RP^4$'s \cite{CS2}, \cite{AR}. The first progress in trivializing  Cappell-Shaneson spheres was due to Akbulut and Kirby \cite{AK1} in 1979, showing via Kirby calculus that the example with $m=0$ and untwisted framing is $S^4$. Aitchison and Rubenstein \cite{AR} observed that this framing is not the one arising (as claimed in \cite{AK1}) from an exotic $\RP^4$, and showed that for all $A_m$ the example with untwisted framing is standard. (They used different matrices, but those are easily seen to be conjugate to $A_m$ \cite{Sig0}.) The twisted framings have been much harder. Akbulut and Kirby worked for 6 years on the $m=0$ case before publishing an elegant handle diagram of it with no 3-handles and only two 1-handles \cite{AK2}. This was then shown by the author to be diffeomorphic to $S^4$ \cite{Sig0}. That same paper exhibited handle decompositions of the Cappell-Shaneson spheres corresponding to all the matrices $A_m$ and twisted framing; the derivation by a long sequence of Kirby moves appeared in \cite{Sigm}. These diagrams stood for two decades until a clever observation of Akbulut \cite{A} showed they were all diffeomorphic to the $m=0$ case, and hence standard.

The arguments trivializing the Kirby diagrams of Cappell-Shaneson spheres with twisted framing all end with a similar trick, first seen in \cite{Sig0}. In that example, an initial simplification of the diagram from \cite{AK2} (following a group-theoretic computation generated by computer and simplified by Akbulut and Casson) resulted in a diagram with two twist boxes. It soon became apparent that the numbers of twists could be varied, resulting in a family of homotopy spheres parametrized by $n\in\zed$ (not obviously related to the Cappell-Shaneson examples, except for the case of interest, $n=4$). The $n=0$ case was easily seen to be $S^4$, so the crucial step was to show that the examples were all diffeomorphic to each other. This was accomplished by introducing a 2-handle/3-handle pair, with the attaching curve of the 2-handle appearing in a strategic location (although unknotted in the $S^3$ obtained by surgery on the rest of the diagram). The 2-handle was then dragged around a torus embedded in a 3-manifold obtained by surgery on just some components, and returned to its original location. Some handle slides were needed in order to move the 2-handle past another 2-handle, changing some twists, and hence $n$, by 1. Recently, the author examined the proof in the upside-down handle picture, where it is relevant to generalizing Property R. From this viewpoint, a 1-handle/2-handle pair is introduced, but the 2-handle again slides around an embedded torus to change twists. (This appears in \cite{PropR} in the context of Property R.) Akbulut's proof again introduces a 2-3 pair and runs the 2-handle around a torus to change $m$ by 1. (In that case, one needs to strategically locate {\em two} circles that are unknotted in the surgered $S^3$ in order to find the whole torus --- the author essentially did this in \cite{Sig0}, but missed the crucial punch line!)

These observations suggest a common underlying mechanism that we will exploit directly. The crucial feature of the examples above is an embedded $D^2\times T^2$ with a 2-handle attached along an essential circle of $T^2$. The framing of the 2-handle is $-1$ in each case, so we obtain what is called a {\em fishtail neighborhood}. In the present paper, we go back to the original description of the Cappell-Shaneson manifolds and look for fishtail neighborhoods there. We then use symmetries of the fishtail neighborhoods (logarithmic transformations) to produce diffeomorphisms between examples corresponding to different conjugacy classes of matrices, with different traces. The main tool, Theorem~\ref{twist}, whose proof requires no handle diagrams, is stated in the more general context of 4-manifolds with fibered 2-knots, although it seems most useful when the fibers are punctured 3-tori. In the present paper, we apply this theorem and matrix algebra to show that for many Cappell-Shaneson matrices, the resulting pair of homotopy spheres is diffeomorphic to the pair arising from the matrix $A_0$. The latter two homotopy spheres are standard, by \cite{AK1} for the untwisted framing and by \cite{Sig0} for the twisted one. However, \cite{Sig0} relies on a long and tricky sequence of handle moves beginning with \cite{AK2}, a result of years of study. Section~\ref{Framing} is devoted to a direct proof via Theorem~\ref{twist} that the two Cappell-Shaneson spheres constructed from $A_0$ are diffeomorphic to each other. As a result, the theorems in this paper only depend on Kirby calculus through \cite{AK1}, in which one simple Cappell-Shaneson sphere (from $A_0$ with untwisted framing) is exhibited as a handle diagram that immediately cancels. Section~\ref{Framing} can also be interpreted as an example showing that a Gluck twist can sometimes be undone by exploiting symmetries of fishtail neighborhoods. The example seems somewhat special, however, involving several strategically placed fishtails. In fact, it sometimes is possible to create exotic smooth structures by Gluck twists on spheres in 4-manifolds, at least in the nonorientable case \cite{Gluck}.

The main results of this paper are given in Section~\ref{CS}. Example~\ref{Am}(a) shows that the pairs of diffeomorphism types given by the matrices $A_m$ are independent of $m$. Thus, they are all the standard $S^4$, and we recover the results of Akbulut and Aitchison-Rubenstein. Example~\ref{Am}(b) then gives a previously unknown result, exhibiting a Cappell-Shaneson matrix not conjugate to any $A_m$ and showing that the resulting homotopy spheres are standard. The subsequent theorems then give large classes of Cappell-Shaneson matrices for which the corresponding homotopy spheres must be standard. These theorems are presumably not optimal, but already seem to indicate that if any exotic Cappell-Shaneson spheres exist, their matrices must have several moderately large entries (Corollary~\ref{entries}). While conjugacy classes in $\SL(3,\zed)$ can be understood using algebraic number theory (see the appendix of \cite{AR}), it seems difficult to relate that technology to specific families of Cappell-Shaneson matrices. We provide no further results about conjugacy classes, although there are presumably infinitely many beyond those of $A_m$ covered by the theorems. The difficulty of analyzing conjugacy classes suggests that matrix-level results such as Theorems~\ref{mod d} and \ref{mod a} may be useful even for some matrices that are (nonobviously) conjugate to some $A_m$. The author conjectures that all Cappell-Shaneson spheres are covered by these or similar consequences of the main theorem.

\section{The main tool}

The Cappell-Shaneson homotopy 4-spheres arise from the following construction. Let $M$ be a connected, oriented 3-manifold with an orientation-preserving self-diffeomorphism $\varphi$. Without loss of generality, we may assume $\varphi$ restricts to the identity in a neighborhood of some point $p\in M$. Let $X_\varphi$ be the mapping torus $\R\times M/(t,x)\sim (t-1,\varphi(x))$. Then $\R\times \{ p\}$ descends to a circle $C\subset X_\varphi$ with a canonical framing. For $\epsilon=0,1$ let $X^\epsilon_\varphi$ be obtained from $X_\varphi$ by surgery on $C$ with the canonical ($\epsilon=0$) or the noncanonical ($\epsilon=1$) framing. (Thus, $X^\epsilon_\varphi$ contains a 2-sphere whose complement $X_\varphi-C$ is fibered by $M-\{p\}$.) The Cappell-Shaneson examples arise when $M$ is the 3-torus, so $\varphi$ is obtained from some $A\in SL(3,\zed)$, and $X^\epsilon_\varphi$ is a homotopy 4-sphere if and only if $\det(A-I)=\pm 1$. However, our main tool applies in the general case:

\begin{thm}\label{twist}
For $M$ and $\varphi$ as above, suppose there is an oriented circle $\alpha\subset M$ containing $p$, and a torus $T\subset M$ containing both $\alpha$ and $\varphi(\alpha)$, in which the two circles have algebraic intersection $\pm 1$ with $\alpha\cap\varphi(\alpha)$ connected. Suppose that the framings induced by $T$ on $\alpha$ and $\varphi(\alpha)$ correspond under $\varphi$. If $\delta: M\to M$ denotes the Dehn twist along $T$ parallel to $\varphi(\alpha)-\alpha$, then $X^\epsilon_{\varphi\circ\delta^k}= X^\epsilon_\varphi = X^\epsilon_{\delta^k\circ\varphi} $ for all $k\in\zed$ and $\epsilon=0,1$.
\end{thm}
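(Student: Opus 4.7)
The plan is to exhibit an ambient self-diffeomorphism $\Phi$ of $X^\epsilon_\varphi$ supported in a neighborhood of a single fiber $M_0$ of the mapping-torus fibration and restricting on that fiber to $\delta$. Once such a $\Phi$ is in hand, a standard cut-and-reglue argument along $M_0$ converts it into a diffeomorphism $X^\epsilon_\varphi \cong X^\epsilon_{\delta \circ \varphi}$; placing the cut on the other side of the support of $\Phi$ (or replacing $\Phi$ by $\Phi^{-1}$) gives $X^\epsilon_\varphi \cong X^\epsilon_{\varphi \circ \delta}$. Because a Dehn twist along $T$ is the identity on $T$ itself, $\delta$ fixes $p$, $\alpha$, and $\varphi(\alpha)$ pointwise, so the hypotheses of the theorem persist when $\varphi$ is replaced by $\varphi\delta^j$ or $\delta^j\varphi$. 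Induction on $|k|$ therefore reduces the statement to the case $k = \pm 1$.

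To construct $\Phi$, I would exploit a fishtail neighborhood of $T$ created by the surgery on $C$. The torus $T$ has trivial normal bundle $T\times D^2$ in $X_\varphi$, with one $D^2$-direction being $\partial_t$ (the mapping-torus direction) and the other being the normal to $T$ inside $M$. The circle $C = \R\times\{p\}/\zed$ is the $\partial_t$-orbit of $p$ and meets $T$ transversely at the single point $p$. The three hypotheses---algebraic intersection $\pm 1$, connected intersection at $p$, and matching framings under $\varphi$---let me pin down an explicit coordinate model for a neighborhood of $T\cup C$ in $X_\varphi$. Performing surgery on $C$ with framing $\epsilon$ then exhibits, inside $X^\epsilon_\varphi$, a fishtail neighborhood of $T$ in the sense of the introduction (or its analogue for the opposite framing choice): a copy of $T^2\times D^2$ with a $2$-handle attached along an essential curve on $\partial(T^2\times D^2)$ whose class on $T^2$ is $\varphi(\alpha)-\alpha$, with framing $\pm 1$.

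A fishtail neighborhood with vanishing cycle $\gamma$ admits a self-diffeomorphism realizing the Dehn twist along $\gamma$ on its central torus while being the identity near the boundary, essentially because the monodromy of its underlying Lefschetz-type singular fibration absorbs one power of that Dehn twist. Extending by the identity over the rest of $X^\epsilon_\varphi$ gives the desired $\Phi$: it acts as $\delta$ on $T\subset M_0$, and hence on all of $M_0$ since $\delta$ is supported near $T$. The technical heart of the proof, and the step I expect to be the main obstacle, is verifying this fishtail identification---in particular, that the framing of the $2$-handle is exactly $\pm 1$ (rather than some other integer) and that its attaching circle lies in the correct homology class on the boundary torus. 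The framing hypothesis on $\alpha$ and $\varphi(\alpha)$ is precisely what makes the framing computation come out right, while the connected intersection condition is what makes the attaching circle embedded. Once the fishtail is in place, extending its symmetry and verifying the cut-and-reglue identification is formal.
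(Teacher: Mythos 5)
Your proposal has the right raw materials---the fishtail centered on a collar of $T$, the $2$-handle coming from the surgered punctured torus swept out by $\alpha$ in the mapping torus, the $\pm 1$ framing forced by the framing hypothesis and the single transverse crossing, and the final reinterpretation as cut-and-reglue along a fiber---and these are indeed the ingredients the paper uses. But the pivotal intermediate step, as you have framed it, does not work. You propose to produce an ambient \emph{self}-diffeomorphism $\Phi$ of $X^\epsilon_\varphi$ supported near a fiber $M_0$ with $\Phi|_{M_0}=\delta$, and to deduce $X^\epsilon_\varphi\cong X^\epsilon_{\delta\circ\varphi}$ by cutting along $M_0$ and regluing. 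This deduction fails: if $\Phi$ is a self-diffeomorphism of $X^\epsilon_\varphi$ taking $M_0$ to itself with $\Phi|_{M_0}=\delta$, then after cutting along $M_0$ the lifted map restricts to $\delta$ on \emph{both} boundary copies $M_0^\pm$, so it does not descend to a diffeomorphism from the identity-glued quotient to the $\delta$-glued quotient (which would require $\delta$ on one side and the identity on the other). Moreover, if $\Phi$ were supported in a one-sided collar $[0,1]\times M_0$ with $\Phi|_{M_0}=\delta$, smoothness at $M_0$ would force $\delta$ to agree with the identity there, a contradiction. Relatedly, the fishtail symmetry you invoke---a self-diffeomorphism of the fishtail that is the identity near $\partial\Phi$ while realizing a Dehn twist on the central torus---is not what the relevant lemma provides. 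What a fishtail neighborhood actually gives (the paper's Lemma~\ref{fishtail}) is that the Dehn twist on $\partial N$ along $T^2$ parallel to $\gamma$ extends over $\Phi=N\cup(\text{2-handle})$, and hence that removing $N$ and regluing by the Dehn twist does not change the diffeomorphism type of the ambient manifold. This produces a diffeomorphism from $X^\epsilon_\varphi$ to the \emph{reglued} manifold, not a self-diffeomorphism of $X^\epsilon_\varphi$. The paper then shows that this particular regluing of $N'\subset\int N$ is equivalent (after pushing the support to a single face of $N$) to cutting along an $M$-fiber and regluing by $\delta^k$, which identifies the result with $X^\epsilon_{\varphi\circ\delta^k}$; the $\delta^k\circ\varphi$ case follows by applying the result for $-k$ to $\varphi^{-1}$ and reversing $t$. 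So the missing idea in your proposal is to aim the fishtail lemma at a diffeomorphism \emph{between two manifolds} (the original and the reglued one) rather than a self-diffeomorphism of the original, and then to identify the reglued manifold with $X^\epsilon_{\varphi\circ\delta^k}$.
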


\noindent A more careful definition of $\delta$ is to identify a collar of $T$ in $M$ with $I\times S^1\times S^1$ so that the first $S^1$-factor is homologous to $\varphi_*[\alpha]-[\alpha]$, then take $\delta=(\text{Dehn\ twist})\times \id_{S^1}$. Since $p$ lies on the boundary of the collar, we can assume it is outside the support of $\delta$ so that the framings on $C$ are undisturbed. Strictly speaking, the notation should also specify the side of $T$ on which the collar lies, although the theorem applies to both. However, there is no ambiguity when $M=T^3$ as in the Cappell-Shaneson setting.

The main idea of the proof is to locate a fishtail neighborhood in $X^\epsilon_\varphi$ and invoke the following lemma.

\begin{lem} \label{fishtail}
Suppose $N=D^2\times S^1\times S^1$ is embedded in a 4-manifold $X$. Suppose there is a disk $D\subset X$ intersecting $N$ precisely in $\partial D = \{q\}\times S^1$ for some $q\in \partial  D^2\times S^1$, and that the normal framing of $D$ in $X$ differs from the product framing on $\partial D\subset \partial N$ by $\pm 1$ twist. Then the diffeomorphism type of $X$ does not change if we remove $N$ and reglue it by a $k$-fold Dehn twist of $\partial N$ along $S^1\times S^1$ parallel to $\gamma=\{q\}\times S^1 $.
\end{lem}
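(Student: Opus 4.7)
My plan is to form the 4-dimensional fishtail neighborhood $F := N \cup \nu(D) \subset X$ (where $\nu(D)$ is a tubular neighborhood of $D$) and construct a self-diffeomorphism $\Phi_k : F \to F$ that is the identity in a collar of $\partial F$ and realizes the inverse of the prescribed $k$-fold Dehn twist on the interior torus $\partial N$. Granted such $\Phi_k$, a diffeomorphism from the regluing $X'$ back to $X$ is assembled by taking $\Phi_k$ on $F$ and the identity on $X \setminus \int(F)$: the extra twist at $\partial N$ in the regluing is absorbed by $\Phi_k|_{\partial N}$, while matching along $\partial F$ is automatic since $\Phi_k$ is the identity there.

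The existence of $\Phi_k$ hinges on the $\pm 1$-framed thimble $D$. On first homology, the Dehn twist $\tau_k$ sends the meridian class $[\partial D^2]$ to $[\partial D^2] + k[\gamma]$; this vanishes in $H_1(F)$ since $\gamma = \partial D$ bounds the thimble $D$, so the homological obstruction to extending $\tau_k$ to a self-diffeomorphism of $F$ disappears. Geometrically, I would construct $\Phi_k$ by modifying a meridian disk $\mu$ of $N$ (the disk $D^2 \times \{pt\} \subset N$, with $\partial \mu = \partial D^2$) via tubing it $k$ times to parallel copies of the thimble $D$: this yields a new disk $\mu' \subset F$ whose boundary realizes $\tau_k^{-1}([\partial D^2])$, and the re-embedding of $N$ inside $F$ so that $\mu'$ plays the role of $\mu$ is induced by a self-diffeomorphism $\Phi_k$ of $F$ that is the identity near $\partial F$. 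The $\pm 1$ framing of $D$ is essential: after $k$ tubings it ensures that the normal framing of $\mu'$ is correct, so that the re-embedding extends smoothly across a full neighborhood. Outside a neighborhood of $\mu \cup \nu(D)$, the diffeomorphism $\Phi_k$ can be taken to be the identity, ensuring the required boundary behavior on $\partial F$.

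The main obstacle, as I see it, is verifying that this tubing construction restricts on $\partial N$ to precisely the Dehn twist $\tau_k$ of the lemma (``along $S^1 \times S^1$ parallel to $\gamma$'') and not some other element of the mapping class group $\SL(3, \zed)$ of $T^3$. I would tackle this in explicit coordinates on the standard fishtail neighborhood, describing the re-embedding of $N$ via a concrete map and computing $\Phi_k|_{\partial N}$ directly. The $\pm 1$-framing hypothesis is sharp here: any other framing would make the tubed disk carry the wrong framing, obstructing the smoothness of the construction. Once the restriction of $\Phi_k$ is shown to be $\tau_k^{\pm 1}$, the assembly into a diffeomorphism $X \to X'$ is formal, and the sign of $k$ can be absorbed by reversing the orientation of $D$ if needed.
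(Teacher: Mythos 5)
Your core idea---tubing the meridian disk of $N$ to parallel copies of the thimble $D$ to produce a disk whose boundary is the twisted meridian---is a reasonable heuristic and is related in spirit to the correct picture, but the way you have set up the boundary conditions contains a genuine contradiction, and the central construction is asserted rather than carried out.

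The contradiction: $\partial N$ is not an ``interior torus'' of $F$. The part of $\partial N$ away from a neighborhood $A$ of $\gamma$ lies on $\partial F$, while only $A$ itself is interior to $F$. If $\Phi_k$ is the identity near $\partial F$, then $\Phi_k\vert_{\partial N}$ is forced to be supported in $A\cong S^1\times D^2$. But a diffeomorphism of $T^3$ supported in a tubular neighborhood of $\gamma$ acts trivially on $H_1(T^3)$, whereas the $k$-fold Dehn twist along $S^1\times S^1$ parallel to $\gamma$ acts as a nontrivial transvection. So $\Phi_k\vert_{\partial N}$ cannot ``realize the inverse Dehn twist'' while $\Phi_k$ is the identity near $\partial F$. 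The correct boundary condition (which the paper uses) is the opposite: one isotopes the gluing map $\tau_k$ to be supported \emph{away} from $\gamma$, so it extends over $\partial\Phi$ by the identity on $\partial\nu(D)\setminus A$, and one then seeks an extension of this \emph{nontrivial} boundary diffeomorphism over $\Phi$. Alternatively, one can shrink $N$ into its interior so that the cutting torus genuinely lies in $\int F$---the paper does this when applying the lemma in Theorem~\ref{twist}---but then the extension problem is over the annular region $F\setminus\int N'$ and not just ``inside $N$.''

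Beyond the setup error, the heart of the lemma is exactly what you defer: producing the self-diffeomorphism from the tubed disk $\mu'$ and showing its restriction is the prescribed Dehn twist and not some other element of $\SL(3,\zed)$. Saying ``the re-embedding of $N$ inside $F$ so that $\mu'$ plays the role of $\mu$ is induced by a self-diffeomorphism'' begs the question; that diffeomorphism is the content to be constructed. The paper avoids all of this with a short direct argument: regard $\partial\Phi$ as a $T^2$-bundle over $S^1$ whose monodromy $\psi$ is a Dehn twist of the fiber along $\gamma$ (this is where the $\pm1$ framing is used, to turn the $\pm1$-surgery on $\gamma$ into a monodromy twist); observe that the required boundary diffeomorphism is precisely $\psi$ applied fiberwise; and extend it over a collar $I\times\partial\Phi$ by the bundle rotation $(s,t,x)\mapsto(s,s+t,x)$, which interpolates from the identity at $s=0$ to the fiberwise $\psi$ at $s=1$. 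This is both more economical and avoids the need to identify the resulting mapping class in $\SL(3,\zed)$, since the extension is written down explicitly.
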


This lemma is well-known in various forms. The resulting submanifold $\Phi=D^2\times T^2\cup _\gamma (\text{2-handle})$ of $X$ is called a {\em fishtail neighborhood} (up to orientation) and is a regular neighborhood of a sphere with a double point. The gluing operation is called a multiplicity-1 {\em logarithmic transformation} with direction $\pm\gamma$ and auxiliary multiplicity $|k|$ (or in a different context, a {\em Luttinger surgery}). The lemma is at the heart of the proof \cite{M} that simply connected elliptic surfaces with fixed $b_2$ are determined by their multiplicities. A different application appears in \cite{Fish}. We include a proof of the lemma for completeness.

\begin{proof}
We can assume (after isotopy) that the gluing diffeomorphism is supported away from $\gamma$. Thus, it extends by the identity over $\partial \Phi$. It now suffices to see that the diffeomorphism extends over $\Phi$. The general case then follows from the case $k=1$. Interpret $\partial N$ as the trivial $\partial D^2\times S^1$-bundle over the middle factor of $N$, so that $\gamma$ corresponds to the last factor of the fiber. Adding the 2-handle to $N$ changes the boundary by $\pm 1$-surgery on $\gamma$. This, in turn, can be interpreted as changing the bundle monodromy on $\partial D^2\times S^1$ to a Dehn twist $\psi$ along $\gamma$. (Think of the tubular neighborhood of $\gamma$ as an annulus in the fiber crossed with an interval in the base, then perform the surgery so that the gluing map is supported in a single fiber.) Thus, we have identified $\partial \Phi$ with the bundle $\R\times T^2/(t,x)\sim (t-1,\psi(x))$. The gluing diffeomorphism on $\partial \Phi$ specified by the lemma is a Dehn twist parallel to $\gamma$ along the torus descending from $\R \times \gamma \subset \R \times T^2$, namely $\psi$ on each fiber. To extend this over $\Phi$, work in a collar $I\times\partial \Phi$ with the first factor parametrized by $s\in [0,1]$.  Then the diffeomorphism $(s,t,x)\mapsto(s,s+t,x)$ is the required one for $s=1$ and the identity for $s=0$, so it extends as required.
\end{proof}

This last diffeomorphism of $\Phi$ can also be described as isotoping the attaching circle $\gamma$ around the torus $S^1\times S^1$ and back to its original position. Thus, it is the mechanism underlying the endgames of \cite{Sig0} and \cite{A} and the examples of \cite{PropR}. Alternatively, we can view $\partial \Phi$ as the $S^1$-bundle over $T^2$ with Euler number $\pm 1$, and again see that the given diffeomorphism on this is isotopic to the identity.

\begin{proof}[Proof of Theorem~\ref{twist}]
Identify $M$ with $\{0\}\times M \subset X_\varphi$. Then $I\times \alpha\subset \R\times M$ (via the product embedding) descends to a cylinder in $X_\varphi$ with embedded interior, containing the surgery circle $C$ determined by $p$, and whose oriented boundary is $-\alpha \cup \varphi(\alpha)\subset T \subset M$. Since $\varphi$ is the identity near $p$, the curves $\alpha$ and $\varphi(\alpha)$ intersect in an arc there, oriented compatibly. By hypothesis, they cross there and have no other intersections. Thus, the image of the cylinder is a punctured torus $F$ embedded in $X_\varphi$, whose boundary $\partial F\subset T$ is made from  $\varphi(\alpha)$ and $-\alpha$ by deleting the common segment --- in particular, it is homologous to $\varphi_*[\alpha]-[\alpha]$ in $T$. The heavy solid lines in Figure~\ref{F} show the intersection of $F$ with $I_0\times T\subset I_0\times M$, where the latter is a bicollaring of $M$ in $X_\varphi$ respecting the local product structure.

\begin{figure}
\centerline{\epsfbox{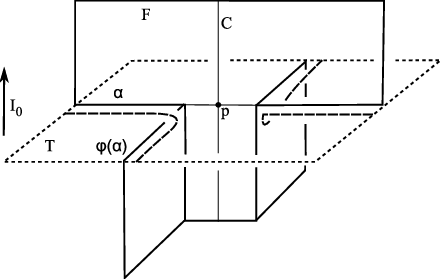}}
\caption{}\label{F}
\end{figure}

To construct $N$ in $X_\varphi$ and $\Phi$ in $X^\epsilon_\varphi$, fix a collar $[0,2]\times T^2$ of $T$ in $M$, with $T$ identified with $\{ 2\}\times T^2$. Let $N=I_0\times [0,1] \times T^2\subset I_0\times M\subset X_\varphi$. Let $F'$ be the punctured torus obtained from $F$ by connecting it to $N$ using the obvious collar $[1,2]\times \partial F$ of $\partial F$ in $M$. Then $F'$ determines a normal framing on $\partial F'\subset \partial N$, obtained by restricting any normal framing of $F'\subset X_\varphi$ (and independent of choice of the latter). The framing determined by $F'$ has $\pm 1$ twist relative to the framing induced by the inclusions $\partial F'\subset \{1\}\times T^2\subset \partial N$. This can be seen by explicitly constructing a disjoint parallel copy of $F'$ in $X_\varphi$. Since the framings induced by $T$ on $\alpha$ and $\varphi(\alpha)$ correspond under  $\varphi$, we can simply visualize this copy pushed away from $F$ toward the reader in Figure~\ref{F}, and note that its boundary is the dashed curve. The full twist of that curve about $\partial F$ is retained when we slide it to $\partial N$ along the collar $[1,2]\times \partial F$ (the fourth coordinate in the figure). Alternatively, project $F$ to $T$, where it appears (essentially) as the standard diagram for resolving a knot crossing (Figure~\ref{crossing}). After an isotopy in $I_0\times T$, the band connecting $\alpha$ and $\varphi(\alpha)$ is embedded in the plane. The parallel copy of $F$ then determines the blackboard framing, which picks up a twist when we remove the crossing by a Type I Reidemeister move. Having determined the framing induced by $F'$, we now recall that $F'$ contains the surgery curve $C$ in its interior, so we may perform the surgery pairwise to obtain a disk $D\subset X^\epsilon_\varphi$ for $\epsilon=0,1$. The framing induced by $D$ on $\partial D=\partial F'$ agrees with that induced by $F'$ since $D$ is homologous in $H_2(X^\epsilon_\varphi,N)$ to a copy of $F'$ obtained by pushing off of $C$ before the surgery. (Note that the surgery creates no 2-homology since the meridian $S^2$ to $C$ bounds a punctured copy of $M$.) Thus, $N$ and $D$ satisfy the hypotheses of Lemma~\ref{fishtail} with $\gamma$ homologous to the primitive class $\varphi_*[\alpha]-[\alpha]$ in $T$.

\begin{figure}
\centerline{\epsfbox{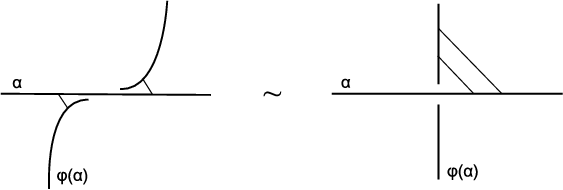}}
\caption{}\label{crossing}
\end{figure}

To complete the proof, shrink $N$ inside itself to obtain $N'\subset \int N$ with no corners. By Lemma~\ref{fishtail}, the diffeomorphism type of $X^\epsilon_\varphi$ is unchanged if we cut out $N'$ and reglue it by a $k$-fold Dehn twist along $T^2$ parallel to $\gamma$. Extend the diffeomorphism across the collar to $\partial N$, arranging its support to be in the front face $\partial_+I_0\times [0,1]\times T^2$. It follows that $X^\epsilon_\varphi$ is unchanged if we cut along this face and reglue by the given Dehn twist, or equivalently, precede the surgery by cutting along an $M$-fiber and regluing by $\delta^k$. Thus, we have produced a diffeomorphism from $X^\epsilon_\varphi$ to  $X^\epsilon_{\varphi\circ\delta^k}$. To get $X^\epsilon_{\delta^k\circ\varphi}$, apply the result for $-k$ to $\varphi^{-1}$ and then flip the sign of $t$.
\end{proof}

\section{Cappell-Shaneson spheres}\label{CS}

Cappell-Shaneson spheres were studied extensively by Aitchison and Rubenstein \cite{AR}. These examples are obtained as in the previous section with $M=T^3$ and $\varphi$ obtained from some  matrix $A\in \SL(3,\zed)$ with $\det(A-I)=\pm 1$, by straightening the corresponding linear diffeomorphism of $T^3$ to the identity near 0. Since inverting $A$ preserves $X^\epsilon_\varphi$ but flips the sign of $\det(A-I)$, we assume without loss of generality that the sign is $+1$, and refer to such $A$ as {\em Cappell-Shaneson matrices}. While the $T^3$-bundle $X_\varphi$ has many sections, they are all related by fiber-preserving diffeomorphisms, so we lose no generality surgering only on the 0-section. Each Cappell-Shaneson matrix $A$ then determines a pair of homotopy spheres, distinguished by the framing of the surgery, which in turn depends on how the diffeomorphism of $T^3$ determined by $A$ is straightened to the identity near 0. While Aitchison and Rubenstein described a canonical straightening procedure, we will only need to deal with the unordered pair of diffeomorphism types in this section. This pair only depends on the conjugacy class of $A$ in $\GL(3,\zed)$. The trace $\tr(A)\in \zed$ is an invariant of conjugacy, and for each value $n$ of the trace there are finitely many conjugacy classes of Cappell-Shaneson matrices, corresponding bijectively to the ideal class group of $\zed[\theta]$, for $\theta$ a root of the characteristic polynomial $\lambda^3-n\lambda^2+(n-1)\lambda-1$ of $A$. The matrices $A_m$ from the Introduction represent the simplest conjugacy class for each trace, in that they correspond to the identity element of each ideal class group. A table in \cite{AR} lists the number of conjugacy classes for each trace $n$ between $-9$ and 14, and PARI \cite{C} quickly computes the number even when $n$ is on the order of $10^8$. Experimentation suggests that the number of classes grows superlinearly with large $n$ and is symmetric under the reflection $n\mapsto 5-n$. However, the class is unique when $-4\le n\le 9$. See the appendix of \cite{AR} for further discussion.

The main advance of the present paper in this context is a matrix manipulation that preserves the corresponding pair of diffeomorphism types but can change the trace of the matrix. To apply Theorem~\ref{twist} to a given $A$, we must find a vector $v\in \zed^3$ for which $\zed^3/\langle v, Av\rangle$ is cyclic. Then there is a 2-torus $T\subset T^3$ containing the corresponding circles so that the hypotheses of the theorem are satisfied. (The framing hypothesis is automatically true since the framing induced by $T$ on each circle is constant in $\R^3$.) If such a $v$ exists, then we can find a basis of the form $(v,w,Av)$, so we lose no generality by conjugating $A$ to the standard form
 $$ \left[ \begin{array}{ccc} 0 & a & b \\ 0 & c & d \\ 1 & e & f \end{array} \right].$$
 Aitchison and Rubenstein showed that any Cappell-Shaneson matrix can be conjugated into this form, so such a $v$ indeed exists for every $A$. (Actually, Aitchison and Rubenstein obtained the transpose of this matrix, which is clearly an equivalent statement.) For $A$ in this form, Theorem~\ref{twist} allows us to compose $A$ with a Dehn twist $\delta$ along the torus spanned by the first and third coordinate axes, in the direction of $Av-v=[-1\ 0\ 1]^T$. That is, $\delta$ is isotopic to the linear diffeomorphism
 $$ \Delta= \left[ \begin{array}{ccc} 1 & -1 & 0 \\ 0 & 1 & 0 \\ 0 & 1 & 1 \end{array} \right].$$
 This allows us to change $A$ (in standard form) by adding any multiple of the second row to the third while subtracting the same multiple from the first, or by the conjugate operation on the second column, without changing the resulting pair of diffeomorphism types. In particular, we can change $f$ by any multiple of $d$ without changing $c$, (or change $c$ and $f$ by independent multiples of $d$), so we can change $\tr(A)$ by any multiple of $d$.

\begin{examples}\label{Am}
a) Consider the Cappell-Shaneson spheres corresponding to the matrices
$$ A_m = \left[ \begin{array}{ccc} 0 & 1 & 0 \\ 0 & 1 & 1 \\ 1 & 0 & m+1 \end{array} \right]$$
discussed in the Introduction. We can now easily see that these homotopy spheres are standard. Since $A_m$ is in standard form with $d=1$, we can change it to have any trace, say 2. The resulting matrix $$ \left[ \begin{array}{ccc} 0 & m+1 & m \\ 0 & 1 & 1 \\ 1 & -m & 1 \end{array} \right]$$
must be conjugate to $A_0$, since there is only one conjugacy class with trace 2. (In fact, this is easy to see directly, by adding $m$ times the first column to the second while subtracting $m$ times the second row from the first.) Since both homotopy spheres arising from $A_0$ are standard (by \cite{AK1} for the untwisted framing and Theorem~\ref{Sig0} below, or \cite{AK2} followed by \cite{Sig0}, in the twisted case), the result follows.

b) There are two conjugacy classes of Cappell-Shaneson matrices with trace $-5$, represented by $A_{-7}$ and
$$ \left[ \begin{array}{ccc} 0 & -5 & -8 \\ 0 & 2 & 3 \\ 1 & 0 & -7 \end{array} \right]$$
\cite{AR}. This latter matrix is in standard form with $d=3$. We can easily change its trace to 1, so the previous argument again shows that both associated homotopy spheres are standard. (For the untwisted framing, this was first shown in \cite{AR}.) This shows that our new method deals with a strictly larger class of Cappell-Shaneson examples than given in (a). Once again, we can avoid appealing to number theory (uniqueness of the conjugacy class of Cappell-Shaneson matrices with trace 1) by an explicit conjugation of the relevant trace-1 matrix:
$$ \left[ \begin{array}{ccc} -1 & -4 & 1 \\ 1 & 5 & 1 \\ 0 & 0 & -1 \end{array} \right]
\left[ \begin{array}{ccc} 0 & -9 & -14 \\ 0 & 2 & 3 \\ 1 & 4 & -1 \end{array}  \right]
\left[ \begin{array}{ccc} -5 & -4 & -9 \\ 1 & 1 & 2 \\ 0 & 0 & -1 \end{array} \right]=
\left[ \begin{array}{ccc} 0 & 1 & 0 \\ 0 & 1 & 1 \\ 1 & 0 & 0 \end{array} \right]=A_{-1}.$$
\end{examples}

The method of these examples easily generalizes to show:
\begin{thm}\label{mod d}
Suppose the Cappell-Shaneson matrix $A$ can be conjugated to standard form with $\tr(A)\equiv r$ mod $d$ for some $r\in\zed$ such that $-6\le r\le 9$ or there is only one conjugacy class with trace $r$ (e.g. $r=11$). Then both homotopy spheres associated to $A$ are diffeomorphic to $S^4$. In particular, this holds whenever $|d|<17$.
\end{thm}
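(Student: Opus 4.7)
The plan is to iterate the matrix manipulation derived from Theorem~\ref{twist} in the paragraph immediately preceding the theorem statement. By hypothesis, conjugate $A$ into standard form, so that the $(2,3)$ entry is $d$ and the trace is congruent to $r$ modulo $d$. Theorem~\ref{twist}, applied through the Dehn twist $\Delta$, then lets me change the trace by any multiple of $d$ without altering the unordered pair of diffeomorphism types. Picking $k\in\zed$ with $\tr(A)+kd=r$ and applying the operation $k$ times, I land on a Cappell-Shaneson matrix $A'$ with $\tr(A')=r$ whose associated pair of homotopy 4-spheres coincides with that of $A$. It therefore suffices to show that every Cappell-Shaneson conjugacy class of trace $r$ yields standard 4-spheres.

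When hypothesis (b) holds, or when $r$ lies in the uniqueness range $[-4,9]$ stated in the introduction, there is a single Cappell-Shaneson conjugacy class of trace $r$; since $\tr(A_m)=m+2$, this unique class contains $A_{r-2}$, and Example~\ref{Am}(a) tells us both associated homotopy spheres are standard. The value $r=-6$ is not listed explicitly in the stated uniqueness range, but the symmetry $n\mapsto 5-n$ of the table in \cite{AR}, combined with the stated uniqueness at $r=11$, gives uniqueness at $r=-6$ as well, and the unique class is again of the form $A_{r-2}=A_{-8}$, standard by Example~\ref{Am}(a). The only remaining value in $[-6,9]$ is $r=-5$, where there are two conjugacy classes: one is represented by $A_{-7}$ (standard by Example~\ref{Am}(a)), and the other is precisely the class treated in Example~\ref{Am}(b), whose associated pair of spheres was shown there to be standard. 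Thus every conjugacy class of trace $r\in[-6,9]$, as well as every uniqueness-class trace $r$, gives only standard spheres, completing the reduction in both cases (a) and (b).

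For the "in particular" clause, note that when $|d|<17$, i.e. $|d|\le 16$, the 16 consecutive integers $\{-6,-5,\ldots,9\}$ contain a representative of every residue class modulo $d$, so we can always choose a valid $r$ in $[-6,9]$; this immediately verifies the hypothesis on $r$. The most delicate step in the plan is the handling of $r=-5$: it is the unique value in $[-6,9]$ where Cappell-Shaneson conjugacy classes are not determined by their trace, and the argument relies squarely on Example~\ref{Am}(b) having explicitly trivialized the "other" class there. Without that example, case (a) would degenerate to the uniqueness subrange $\{-6\}\cup[-4,9]$, which has only 15 elements and would force a correspondingly weaker bound in the final clause.
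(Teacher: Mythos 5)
Your argument follows the paper's own route: use the $\Delta$-move from Theorem~\ref{twist} to shift $\tr(A)$ by multiples of $d$ down to the target $r$, then invoke uniqueness of the conjugacy class of trace $r$ (so $A$ is conjugate to $A_{r-2}$, standard by Example~\ref{Am}(a)), with the exceptional value $r=-5$ dispatched by Example~\ref{Am}(b). Your route to uniqueness at $r=-6$ via the reflection symmetry of the table in \cite{AR} is a touch more indirect than the paper's plain citation of \cite{AR} for $r=-6,11$, but it is sound.

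There is, however, one genuine gap, in the final ``in particular'' clause. You claim that for $|d|\le 16$ the $16$ consecutive integers $\{-6,\ldots,9\}$ hit every residue class mod $d$. That is true only for $d\neq 0$; the bound $|d|<17$ as stated also allows $d=0$, and ``mod $0$'' would force $r=\tr(A)$, which need not lie in $[-6,9]$. The paper closes this hole with a short separate lemma: for any Cappell-Shaneson matrix in standard form, $d$ is odd (a parity argument from $\det(A)=ad-bc=1$ and $\det(A-I)=1$, i.e.\ $b=(c-1)(f-1)-de$), hence in particular $d\neq 0$. Your proposal needs this or an equivalent observation before the last sentence of the theorem is actually established.
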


\begin{proof}
As above, we can assume $\tr(A)=r$. For each of the listed values of $r$ except $-5$, there is only one conjugacy class with trace $r$ (see \cite{AR} for $r=-6,11$), so $A$ is conjugate to $A_{r-2}$, and the result follows from Example~\ref{Am}(a). The remaining case is settled by (b) above. The last sentence follows once we rule out the case $d=0$ by the following lemma.
\end{proof}

\begin{lem}
For any Cappell-Shaneson matrix in standard form, $d$ and either $a$ or $e$ are odd.
\end{lem}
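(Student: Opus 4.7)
The plan is to extract two polynomial identities in $a,b,c,d,e,f$ from the conditions $\det A=1$ and $\det(A-I)=1$, and then reduce everything modulo $2$. First I would expand $\det A$ along the first column (which has only the single nonzero entry $1$ in the bottom slot) to get the identity
$$ad-bc=1.$$
Then I would do the same cofactor expansion for $\det(A-I)$; after substituting $ad-bc=1$ to absorb the $ad-b(c-1)$ contribution, I expect this to collapse to the clean identity
$$b=(c-1)(f-1)-de.$$
These two identities are the only input to the parity argument.

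To show that $d$ is odd, I would argue by contradiction. If $d$ were even, then $ad-bc\equiv -bc\equiv 1\pmod 2$ forces $bc$ odd, so both $b$ and $c$ are odd. But then $(c-1)(f-1)$ is even and $de$ is even, so the second identity forces $b$ to be even, contradicting that $b$ is odd.

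Next, assuming $d$ is odd, I would rule out the possibility that $a$ and $e$ are both even by essentially the same move: if they were, then $ad$ is even, so $ad-bc\equiv 1\pmod 2$ again forces $bc$ odd, so $c$ is odd. But $c$ odd still makes $(c-1)(f-1)$ even, and $e$ even still makes $de$ even, so the second identity gives $b$ even, contradicting $bc$ odd.

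The only real obstacle is the bookkeeping in the cofactor expansion of $\det(A-I)$, which needs to be executed carefully enough that the sign cancellations together with $ad-bc=1$ deliver the compact identity $b=(c-1)(f-1)-de$. Once both identities are in hand, the two parity statements are identical two-line mod-$2$ computations.
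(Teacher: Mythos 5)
Your proposal is correct and is essentially the paper's own argument: the paper also derives $ad-bc=1$ and $b=(c-1)(f-1)-de$ from the two determinant conditions and then runs the identical parity contradiction (the paper merely handles ``$d$ even'' and ``$a,e$ both even'' in one combined sentence rather than two separate paragraphs). Your cofactor expansion of $\det(A-I)$ does indeed collapse as you expect, since $ad-b(c-1)=1+b$, so there is no hidden obstacle.
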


\begin{proof}
If this fails, so either $d$ or both $a$ and $e$ are even, then the condition $\det (A)=1=ad-bc$ guarantees that $b$ and $c$ are  both odd. But the condition $\det(A-I)=1=ad-bc$, or $b=(c-1)(f-1)-de$, contradicts this.
\end{proof}

A bit more computation reveals the following:

\begin{thm}\label{mod a}
Suppose the Cappell-Shaneson matrix $A$ can be conjugated to standard form with $c\equiv r$ mod $d$  or mod $a+ce$ where $-3\le r\le 4$. Then both homotopy spheres associated to $A$ are diffeomorphic to $S^4$. The same holds if $f\equiv 1$ mod $d$ or $a+ce$.
\end{thm}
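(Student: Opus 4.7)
The approach parallels the proof of Theorem \ref{mod d}, leveraging the flexibility provided by Theorem \ref{twist}. Recall that the operation $A \mapsto A\Delta^k$ (introduced preceding Example \ref{Am}) changes $c$ to $c+kd$ while preserving the pair of diffeomorphism types. The plan is to use this operation, together with a companion operation that shifts $c$ by multiples of $a + ce$, to reduce $c$ into the range $\{-3,\dots,4\}$, and then invoke Example \ref{Am}(a) to conclude.

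For the ``mod $d$'' alternative the reduction is immediate: if $c \equiv r \pmod{d}$ with $-3 \le r \le 4$, then applying $A \mapsto A\Delta^{(r-c)/d}$ produces a conjugate standard-form matrix with $c = r$. For the ``mod $a+ce$'' alternative I expect the needed companion operation to arise from another application of Theorem \ref{twist}, this time with a primitive vector $v \neq e_1$. A natural candidate is $v = e_2$, which satisfies the primitivity hypothesis of Theorem \ref{twist} exactly when $\gcd(a,e) = 1$; the induced Dehn twist, computed explicitly (and possibly followed by a conjugation to restore standard form), should shift $c$ by multiples of $a + ce$. When $\gcd(a,e) \ne 1$, a preliminary conjugation by an elementary matrix arranges the needed primitivity. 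With this operation in hand, one reduces $c$ modulo $a+ce$ into $\{-3,\dots,4\}$.

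Once $c = r \in \{-3,\dots,4\}$ is reached, the Cappell-Shaneson constraints $ad - bc = 1$ and $b = (c-1)(f-1) - de$ combine to give
$$ d(a + re) = 1 + r(r-1)(f-1).$$
For $r = 0$ or $r = 1$ the right side equals $1$, forcing $d = \pm 1$ and reducing the problem to Theorem \ref{mod d} (or directly to Example \ref{Am}(a)). For $r \in \{-3,-2,-1,2,3,4\}$, combining $A \mapsto \Delta^j A$ (which shifts $f$ by $jd$) with the above constraint allows me to bring the trace $r + f$ into the unique-conjugacy-class range $[-4,9]$, so the matrix is conjugate to some $A_m$ and Example \ref{Am}(a) concludes.

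The principal obstacle is constructing and verifying the ``mod $a + ce$'' operation: selecting the correct vector $v$ (with any preliminary conjugation needed for primitivity), and computing the resulting Dehn twist's effect on $c$ after re-expressing the output in standard form. The subsequent case analysis for the remaining small values of $r$ is more routine, although each value requires attention to a few sub-cases depending on the sign and magnitude of $d$.
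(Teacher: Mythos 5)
Your reduction of the mod $d$ case to $c=r$ matches the paper's first step, and your observation that $r\in\{0,1\}$ forces $d=\pm1$ (via $d(a+re)=1+r(r-1)(f-1)$) is correct. But there are two genuine gaps, one of approach and one of substance.

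First, the ``mod $a+ce$'' operation does not come from a second application of Theorem~\ref{twist} with a new vector $v$. The paper instead uses ordinary conjugations in $\GL(3,\zed)$: subtracting $e$ times the first column from the second (with the corresponding row operation) reduces $e$ to $0$ and replaces $a$ by $a+ce$; then the elementary conjugation ``add $k$ times the first row to the second, subtract $k$ times the second column from the first, and clean up the first column'' sends $(a,c)\mapsto(a,c+ka)$ while preserving standard form with $e=0$. This is both far simpler and more robust than hunting for a primitive $v$ with $\gcd(a,e)=1$, and it sidesteps the nontrivial verification your route would require (that the new Dehn twist, after restoring standard form, actually shifts $c$ by $a+ce$).

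Second, and more seriously, your endgame for $r\in\{-3,-2,-1,2,3,4\}$ does not work. Shifting $f$ by multiples of $d$ changes the trace only in steps of $|d|$, and you have no a priori bound on $|d|$, so there is no reason the trace can be moved into $[-4,9]$. The paper instead runs a Euclidean descent that you never set up: with $c=r$ one has $0\le c(c-1)\le12$; the move coming from Theorem~\ref{twist} (followed by the cleanup conjugation restoring $e=0$) sends $(a,c)\mapsto(a+kc(c-1),c)$, which lets you force $|a|\le5$ since $a$ is odd; then the elementary conjugation $(a,c)\mapsto(a,c+ka)$ forces $|c|\le2$; alternating the two shrinks $c(c-1)$ at each round and terminates at $c\in\{0,1\}$, where $d=\pm1$ and Theorem~\ref{mod d} finishes. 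Without this alternation your proof does not close.
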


\begin{cor}\label{entries}
If a Cappell-Shaneson homotopy sphere is not diffeomorphic to $S^4$, then its matrix, in standard form, must have $|d|\ge17$, $|a+ce|\ge9$, $|c-\frac12|>4$ and $|c+f-\frac32|>8$. $\qed$
\end{cor}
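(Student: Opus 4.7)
The plan is to obtain each of the four bounds as the contrapositive of Theorem~\ref{mod d} or Theorem~\ref{mod a}, choosing the residue $r$ in a trivial way (typically $r=c$ or $r=\tr(A)$ itself) and exploiting that a sufficiently long run of consecutive integers is a complete residue system modulo a small enough divisor. First, $|d|\ge 17$ is exactly the final sentence of Theorem~\ref{mod d}, so nothing needs doing.

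For $|c+f-\tfrac{3}{2}|>8$: if $\tr(A)=c+f$ lies in $\{-6,-5,\dots,9\}$, take $r=\tr(A)$ in Theorem~\ref{mod d}; since $-6\le r\le 9$ and $\tr(A)\equiv r\pmod{d}$ trivially, both homotopy spheres are standard. Contrapositively, nonstandardness forces $c+f\notin\{-6,\dots,9\}$, which for an integer means $c+f\le -7$ or $c+f\ge 10$, i.e.\ $|c+f-\tfrac{3}{2}|\ge \tfrac{17}{2}>8$. The same template yields $|c-\tfrac{1}{2}|>4$: if $c\in\{-3,-2,\dots,4\}$, set $r=c$ in Theorem~\ref{mod a}; then $c\equiv r\pmod{d}$ is automatic, so the spheres are standard. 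Hence nonstandardness forces $c\notin\{-3,\dots,4\}$, equivalent to $|c-\tfrac{1}{2}|>4$.

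The remaining inequality $|a+ce|\ge 9$ requires a little more care and is essentially the only interesting step. If $1\le|a+ce|\le 8$, the eight consecutive integers $\{-3,-2,\dots,4\}$ form a complete residue system modulo $a+ce$, so some $r$ in that set satisfies $c\equiv r\pmod{a+ce}$, and Theorem~\ref{mod a} forces standardness. The edge case $a+ce=0$ must be excluded separately: substituting $a=-ce$ into $\det A=ad-bc=1$ gives $c(-ed-b)=1$, which forces $c=\pm 1$, contradicting the already-established $|c-\tfrac{1}{2}|>4$. Thus $|a+ce|\ge 9$. The only mildly delicate point in the entire argument is this degenerate case, handled by combining the determinant equation with the bound on $c$ that has just been proved; everything else is a finite check of residues.
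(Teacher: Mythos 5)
Your proof is correct and is essentially the argument the paper intends (the corollary is stated with an immediate $\qed$, leaving the finite residue check implicit). Each bound is the contrapositive of Theorem~\ref{mod d} or Theorem~\ref{mod a} with the trivial choice $r=\tr(A)$ or $r=c$, plus the observation that a run of $k$ consecutive integers is a complete residue system modulo any $m$ with $1\le|m|\le k$. The only point worth flagging is your treatment of the degenerate case $a+ce=0$: your determinant argument (forcing $c=\pm1$, contradicting $|c-\tfrac12|>4$) is valid, but the paper dispatches it more directly by the remark immediately following the corollary that $a+ce$ is always odd in standard form (this falls out of the proof of Theorem~\ref{mod a}, where conjugating to $e=0$ replaces $a$ by $a+ce$ and then the lemma forces that entry odd). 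Either route is fine; the rest is identical.
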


\noindent The matrix can always be conjugated to standard form with $e=0$. (See the proof below, which also implicitly shows $a+ce$ is odd.) However, this standard form is far from unique, so the corollary and preceding theorems are more powerful than they initially appear. We discuss this further below. Theorem~\ref{mod a} can be proved without appealing to algebraic number theory (unlike Theorem~\ref{mod d} which used uniqueness of conjugacy classes with small trace). Thus, we can eliminate number theory from the proof of Corollary~\ref{entries} by weakening the first inequality to $|d|\ge9$ and deleting the last.

\begin{proof} [Proof of Theorem~\ref{mod a}]
We manipulate $A$, ignoring its third column. In addition to our new move, we use elementary conjugations adding $k$ times row (a) to row (b), then subtracting $k$ times column (b) from column (a). (Recall that row operations commute with column operations, since they are matrix multiplications on opposite sides.) It suffices to assume $e=0$, after subtracting $e$ times the first column from the second, and the corresponding row operation that replaces $a$ by $a+ce$. Without disturbing standard form with $e=0$, we can change the pair $(a,c)$ to either (i)~$(a, c+ka)$ or  (ii)~$(a+kc(c-1),c)$ for any $k\in\zed$. For (i) we conjugate, adding $k$ times the first row to the second, with the corresponding column operation. The latter changes the first two entries of the first column, but these can be reset to 0 by row operations whose corresponding column operation only affects the third column. For (ii), use the new move to subtract $k$ times the second row from the first, replacing $a$ by $a-kc$ and $e=0$ by $kc$. Resetting $e$ to 0 as before completes the process. Since $a$ is odd, both (i) and (ii) are nontrivial operations, except in the case $c(c-1)=0$. But this case is easy, since $c=1$ implies $b=(c-1)(f-1)=0$, and $b$ or $c=0$ implies $ad=ad-bc=1$, so $d=\pm 1$. We can now invoke Theorem~\ref{mod d} or reduce to some $A_m$ by hand. (In fact, when $c=1$, $A$ is already forced to be $A_{f-1}$, after we possibly reverse the signs of the middle row and column.) The last sentence of the theorem follows immediately: We can assume $f=1$, invoking (i) and conjugation-invariance of the trace if necessary, and recalling that we replaced $a$ by $a+ce$ when reducing to the $e=0$ case. As before, $b=0$.

To complete the proof, it suffices to assume $c=r$, by arranging this beforehand (in the mod $d$ case) or applying (i). Thus, $-3\le c\le 4$, so $0\le c(c-1)\le 12$. Applying (ii), we can now arrange $-5\le a\le5$ (since $a$ is odd). By (i) again, we can now assume $|c|\le2$, so $0\le c(c-1)\le 6$. Continuing to alternately apply (ii) and (i), we reduce to the case $c=0$ that we have already finished.
\end{proof}

These theorems suggest the following conjecture, which implies that all Cappell-Shaneson homotopy 4-spheres are standard.

\begin{conj}
All Cappell-Shaneson matrices are equivalent under the relation generated by conjugacy and multiplying standard forms by $\Delta$.
\end{conj}

\noindent We have already shown that all such matrices satisfying the hypotheses of Theorem~\ref{mod d} or \ref{mod a} are equivalent to $A_0$, primarily by using $\Delta$ and elementary conjugations preserving the lower left 1 of standard form. One can hope to obtain much more by using more general conjugations. For example, adding $a$ times the third column minus $b$ times the second to the first (with corresponding row operations) destroys the lower left 1 but creates a new 1 in the middle of the first column. This can be moved to the lower left by interchanging the second and third rows (and columns). Restoring standard form gives a new matrix with little resemblance to the original. (If $e=0$, we have replaced $d$ by $-a^2-(1+af)[c+2ab-f+(d+b^2)(1+af)]$ and $a+ce$ by $b+(d+b^2)[f-ab-(d+b^2)(1+af)]$, with the new $c$ enclosed by the last square brackets.) Presumably, this move makes many new matrices accessible by the theorems and corollary, although it is not clear how to proceed systematically. Expanding on \cite{AR}, we can at least reduce the number of parameters needed to express standard-form Cappell-Shaneson matrices. Note that $b=(c-1)(f-1)-de$ is determined by the other entries. Eliminating it from the equation $ad=bc+1$, we get the factorization $(a+ce)d=c(c-1)(f-1)+1=-p(c)$, where $p(\lambda)=\lambda^3-(c+f)\lambda^2+(c+f-1)\lambda-1$ is the characteristic polynomial for Cappell-Shaneson matrices with trace $c+f$. We conclude that standard-form Cappell-Shaneson matrices correspond bijectively to triples $c,e,f\in\zed$ and factorizations of $-p(c)$ (although $e$ can be set to 0 by conjugation). Corollary~\ref{entries} now gives the additional constraint that a Cappell-Shaneson matrix is equivalent to $A_0$ unless $|p(c)|\in\zed$ splits into two factors, $\ge 9$ and 17, respectively.

\section{Changing the framing}\label{Framing}

So far, we have only considered the unoriented pairs of homotopy spheres associated to a given Cappell-Shaneson matrix. We now distinguish within each pair. By repeatedly applying Theorem~\ref{twist}, we construct a diffeomorphism between the two homotopy 4-spheres associated to the matrix $A_0$. All of our results showing that Cappell-Shaneson homotopy spheres are standard then depend on Kirby calculus only through the original paper \cite{AK1}, which is far easier than \cite{AK2} followed by \cite{Sig0}. Another way of viewing this section is that Gluck twists on fibered 2-knots can sometimes be undone by repeated application of Theorem~\ref{twist}.

Recall that the setup for Theorem~\ref{twist} requires the monodromy $\varphi$ to fix a neighborhood of some point $p$ in the fiber $M$. In the case of Cappell-Shaneson spheres, we use linear diffeomorphisms of $T^3$, with fixed point $p=0$ at which the tangent space is not fixed. To apply the theorem, we must isotope the diffeomorphism rel 0 to fix a neighborhood of 0.

\begin{de}
For a 3-dimensional, real vector space $V$, a {\em straightening} of a linear transformation $A\in \GL(V)$ is a homotopy class of paths in $\GL(V)$ from $A$ to the identity $I$.
\end{de}

\noindent When $\det(A)>0$ there are exactly two straightenings of $A$, differing by an element of $\pi_1(\GL(V))=\zed/2$. If $\varphi_0:M\to M$ is a diffeomorphism fixing $p$, then for any straightening $\sigma$ of the derivative $d(\varphi_0)_p$, there is an isotopy $\varphi_t$ rel $p$ for which (i) $\varphi_1$ fixes a neighborhood of $p$ and (ii) $d(\varphi_t)_p$ represents $\sigma$. The diffeomorphism $\varphi_1$ is uniquely determined by $\varphi_0$ and $\sigma$, up to isotopy rel a neighborhood of $p$, and characterized by the existence of an isotopy from $\varphi_0$ satisfying (i) and (ii). (Given another such isotopy $\varphi'_t$, we can glue it to $\varphi_t$ to obtain an isotopy from $\varphi_1$ to $\varphi'_1$ for which the derivative at $p$ is a nullhomotopic path in $(\GL(V),I)$. Straightening the 1-parameter family of diffeomorphisms near $p$ gives the required isotopy.) Thus, the two straightenings  $\sigma$ of $d(\varphi_0)_p$  canonically determine the two possible isotopy classes (rel a neighborhood of $p$) of monodromies for the Cappell-Shaneson construction on $\varphi_0$, and hence the two resulting diffeomorphism types, which we denote by $X^\sigma_{\varphi_0}$. (For each $\sigma$ we straighten and then surger with the untwisted framing. Changing the framing is equivalent to changing $\sigma$.) For $A,B\in \GL(V)$, a homotopy class $\tau$ of paths in $\GL(V)$ from $B$ to $A$ determines a bijection from straightenings $\sigma$ of $A$ to straightenings $\tau \cdot \sigma$ of $B$ by path concatenation. Any isotopy rel $p$ between diffeomorphisms $\varphi$ and $\psi$ of $(M,p)$ determines such a $\tau$ from $d\psi_p$ to $d\varphi_p$, and $X^{\tau\cdot\sigma}_{\psi}=X^\sigma_{\varphi}$. (Glue the two isotopies and apply the above characterization.)

As our main example, let $A$ be a Cappell-Shaneson matrix, and suppose $B$ is obtained from $A$ as in Theorem~\ref{twist}. That is, after a change of basis, $B$ is obtained from $A$ by left- or right-multiplying it by a power $\Delta^k$ of the matrix $\Delta$ given in Section~\ref{CS}. The linear path from $\Delta^k$ to $I$ clearly lies in $\SL(3,\R)$. Multiplying this by $A$ gives the linear path $\tau$ from $B$ to $A$, showing that the latter path lies in $\SL(3,\R)$, and linearity is preserved when we undo the change of basis. But $B$ is also a Cappell-Shaneson matrix, and in fact $X^{\tau\cdot\sigma}_B=X^\sigma_A$ for each straightening $\sigma$. To see this, first note that Theorem~\ref{twist} applies, since the isotopy corresponding to each straightening of $A$ can be chosen to keep $\varphi(\alpha)$ within the torus $T$ of that theorem, satisfying the required hypotheses. (The isotopies will differ by a full twist in the normal bundle of the curve.) Recall that the diffeomorphism $\delta^k$ of Theorem~\ref{twist}, while isotopic to $\Delta^k$ rel 0, is actually supported away from 0 in a neighborhood of a torus. The isotopy can be taken to be linear in $T^3=\R^3/\zed^3$, and in particular its derivative is the linear straightening of $\Delta^k$ at the tangent space at 0. Theorem~\ref{twist} identifies $X^\sigma_A$ with $X^\sigma_{\delta^k \circ A}$ or $X^\sigma_{A \circ \delta^k}$ (both of which are well-defined since $\delta$ is supported away from 0). Our isotopy from $\delta^k$ to $\Delta^k$ changes the bundle monodromy to $B$ while changing the straightening to $\tau\cdot\sigma$ (for the linear $\tau$ discussed above), so $X^{\tau\cdot\sigma}_B=X^\sigma_A$ by the previous paragraph.

While Aitchison and Rubenstein \cite{AR} gave a procedure for straightening any Cappell-Shaneson matrix, it suffices here to examine their simplest special case, when the straight line path lies in $\GL(3,\R)$.

\begin{prop} \label{straight}
Let $A$ be a {\em real} Cappell-Shaneson matrix, that is, $A\in \SL(3,\R)$ with $\det(A-I)=1$. If $\tr(A)\ge 0$ then $A$ can be linearly straightened.
\end{prop}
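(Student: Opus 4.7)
The plan is to reduce the statement to an eigenvalue condition on $A$. The straight-line path $s \mapsto (1-s)I + sA$, $s \in [0,1]$, lies in $\GL(3,\R)$ if and only if each matrix $(1-s)I + sA$ is invertible. At $s=0$ this matrix is $I$, and for $s \in (0,1]$ it is singular precisely when $-\frac{1-s}{s}$ is a real eigenvalue of $A$. Since $-\frac{1-s}{s}$ sweeps out $(-\infty, 0]$ as $s$ ranges over $(0,1]$, the path lies in $\GL(3,\R)$ if and only if $A$ has no real eigenvalue in $(-\infty, 0]$.

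Next I would write down the characteristic polynomial $p(\lambda) = \det(\lambda I - A) = \lambda^3 - \tr(A)\lambda^2 + c_1 \lambda - \det(A)$, where $c_1$ is the sum of the $2 \times 2$ principal minors of $A$. Since $\det A = 1$ and $\det(A - I) = 1$, one has $p(1) = \det(I - A) = -\det(A-I) = -1$, which forces $c_1 = \tr(A) - 1$. Thus $p(\lambda) = \lambda^3 - \tr(A)\lambda^2 + (\tr(A)-1)\lambda - 1$, the very polynomial that appeared in Section~\ref{CS}.

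The substance of the proof is then the elementary claim that $p$ has no root in $(-\infty, 0]$ whenever $\tr(A) \geq 0$. Writing $\lambda = -\mu$ with $\mu \geq 0$ and setting $t = \tr(A) \geq 0$, this becomes positivity of $q(\mu) = \mu^3 + t\mu^2 + (t-1)\mu + 1$ on $[0, \infty)$, which I would verify by splitting the range into two pieces. For $\mu \geq 1$, rewrite $q(\mu) = \mu(\mu^2 - 1) + t\mu(\mu+1) + 1 \geq 1$. For $\mu \in [0, 1)$, the $t$-dependent terms are non-negative, so it suffices to check that $\mu^3 - \mu + 1 > 0$; a one-variable minimization puts the minimum on $[0,1]$ at $\mu = 1/\sqrt{3}$, with value $1 - \frac{2}{3\sqrt{3}} > 0$.

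I do not expect any serious obstacle; the argument is mostly bookkeeping once the eigenvalue reformulation is in hand. The one thing to keep straight is the sign convention relating $\det(A - I) = 1$ to the value $p(1)$, together with the observation that the endpoint $s = 1$ is automatically handled, since $\det A = 1 \neq 0$ reflects the fact that $p(0) = -1 \neq 0$. After that, the positivity of $q$ is a short calculus check, and the hypothesis $\tr(A) \geq 0$ enters exactly to make the $t$-terms in $q$ nonnegative.
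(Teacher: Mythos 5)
Your proposal is correct and follows essentially the same route as the paper: reduce to showing the characteristic polynomial $\lambda^3-\tr(A)\lambda^2+(\tr(A)-1)\lambda-1$ has no nonpositive real root, then substitute $\lambda=-\mu$ and verify positivity of the resulting cubic for $\mu\ge0$. The paper simply declares that final positivity ``obvious for $\tr(A)\ge1$ and not much harder for $\tr(A)\ge0$''; your split at $\mu=1$ together with the one-variable minimization of $\mu^3-\mu+1$ on $[0,1)$ supplies precisely the omitted details.
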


\begin{proof}
We must show that the linear path from $A$ to $I$ lies in $\GL(3,\R)$, i.e., for $0< t < 1$ we have $0\ne\det(tA+(1-t)I) = t^3 \det(A+(\frac1t -1)I)$. Equivalently, we must show that there are no negative roots of the characteristic polynomial $-\det(A-\lambda I)=\lambda^3-\tr(A)\lambda^2+(\tr(A)-1)\lambda-1$ (where the last expression comes from setting $\lambda=0,1$ and comparing with the Cappell-Shaneson conditions). This, in turn, is the same as ruling out positive roots of $s^3+\tr(A)s^2+(\tr(A)-1)s+1$, which is obvious for $\tr(A)\ge 1$ and not much harder for $\tr(A)\ge 0$ (although we only need the case $\tr(A)=4$).
\end{proof}

\noindent The proposition clearly fails whenever $\tr(A)\le-1$; simply set $s=\frac12$.

Although any change in monodromy arising from Theorem~\ref{twist} can be realized by left or right multiplication by some $\Delta^k$ after a suitable change of basis, it will be convenient to have another example expressed without the basis change. Suppose the Cappell-Shaneson matrix $A$ has second column given by $[1\ -1 \ 0]^T$. Then the second basis vector $e_2$ and $Ae_2$ span the integer lattice in the plane perpendicular to the third axis. Thus, Theorem~\ref{twist} gives us a diffeomorphism $\delta_0$ that is a Dehn twist along a torus perpendicular the third axis, in the direction of $Ae_2-e_2= [1\ -2 \ 0]^T$. The corresponding linear diffeomorphism is
$$ \Delta_0 = \left[ \begin{array}{ccc} 1 & 0 & 1 \\ 0 & 1 & -2 \\ 0 & 0 & 1 \end{array} \right].$$
In particular, when $A$ has the required second column, we have $X^{\tau\cdot\sigma}_B=X^\sigma_A$, where $B$ is obtained from $A$ by multiplying by some $\Delta_0^k$ on the left or right, $\tau$ is the linear path from $B$ to $A$, and $\sigma$ is either straightening.

We can now prove the main theorem of the section.

\begin{thm}\label{Sig0} \cite{AK1}, \cite{AK2}, \cite{Sig0}.
The two Cappell-Shaneson spheres given by the matrix $A_0$ of Example~\ref{Am}(a) are diffeomorphic.
\end{thm}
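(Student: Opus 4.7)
The plan is to construct the desired diffeomorphism $X^\sigma_{A_0} \to X^{\sigma'}_{A_0}$ directly, by assembling a sequence of moves whose cumulative effect on the straightening realizes the nontrivial element of $\pi_1(\SL(3,\R)) = \zed/2$. Each move will be either an application of Theorem~\ref{twist} (which replaces the current matrix $A$ by one of $\Delta^k A$, $A\Delta^k$, $\Delta_0^k A$, or $A\Delta_0^k$, and prepends to the straightening the linear path from the new matrix back to $A$) or a conjugation by an element of $\GL(3,\zed)$ (which merely relabels the straightening pointwise and can be used to restore the next matrix to a standard form suitable for the following Dehn twist). Note that Proposition~\ref{straight} guarantees that $A_0$ itself, having trace $2$, admits a linear straightening, so the basepoint of the loop is unproblematic.

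The plan is to thread these ingredients into a finite word starting and ending at $A_0$. Specifically, after an initial $\Delta^k$-move, I would conjugate the result back to standard form, apply a $\Delta_0^{k'}$-move (after arranging the relevant column to be $[1,\ -1,\ 0]^T$), conjugate again, and continue until the matrix closes back to $A_0$. The closing condition forces a modular identity among the exponents, while the prepended linear paths concatenate into a loop in $\SL(3,\R)$ based at $A_0$, whose homotopy class is the target invariant. The desired outcome is that this loop represents the generator of $\pi_1$, whence the two straightenings of $A_0$ give diffeomorphic manifolds.

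Nontriviality of the loop can be checked by projecting $\SL(3,\R)$ onto its maximal compact subgroup $\SO(3)$ via polar decomposition and verifying that the projected loop represents the nontrivial class of $\pi_1(\SO(3)) = \zed/2$, i.e., is homotopic to a single $2\pi$-rotation. I expect the main obstacle to be the combinatorial construction itself: each individual linear path is, after left-translation, confined to the contractible unipotent subgroup generated by $\Delta^k$ or $\Delta_0^k$, and so is contractible rel endpoints in isolation. The nontrivial $\pi_1$-class must therefore emerge from the non-commuting interaction between $\Delta$- and $\Delta_0$-moves once they are interlaced by conjugations that fail to centralize the intermediate matrices. A promising model is the classical identity in $\SL(2,\R)$ in which a short product of three unipotent matrices equals an order-$4$ rotation, whose fourth power represents a nontrivial loop in $\pi_1(\SL(2,\R)) = \zed$; the task here is to realize the $\SL(3,\R)$-analog using $\Delta^{\pm 1}$, $\Delta_0^{\pm 1}$, and carefully chosen integer changes of basis that keep each successive matrix in the standard form required by Theorem~\ref{twist}.
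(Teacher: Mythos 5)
You correctly identify the paper's strategy: concatenate the linear paths prepended by Theorem~\ref{twist} moves, close them into a loop via integer conjugations (which preserve linear straightenings), and detect nontriviality in $\pi_1(\GL^+(3,\R))\cong\zed/2$ by retracting to $\SO(3)$. But you explicitly punt on the ``combinatorial construction itself,'' and that construction is not a side detail --- it \emph{is} the proof.

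The paper exhibits a concrete cycle: four Theorem~\ref{twist} moves (left multiplication by $\Delta^2$, right by $\Delta_0^2$, right by $\Delta^2$, left by $\Delta^2$) carry a specific trace-4 matrix $A$ through $A_0$ to another trace-4 matrix $B$, and the loop closes because $B=CAC^{-1}$ for an explicit $C\in\GL(3,\zed)$, with both $A$ and $B$ linearly straightenable by Proposition~\ref{straight}. To replace the closing edge $\sigma_A\cdot\sigma_B^{-1}$ by the straight chord from $A$ to $B$ (and thus reduce the 1-cycle to a piecewise-linear loop through five integer matrices) one must separately verify that every matrix $tB+(1-t)A$ on that chord is linearly straightenable, which the paper does by a direct determinant positivity estimate, not by Proposition~\ref{straight}. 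Only then can one retract via Gram--Schmidt to $\SO(3)$ and compute the mod-2 winding number of the second-column entries $(a,c)$ along the loop, which comes out nontrivial by inspection. None of this --- the specific word, the conjugating matrix, the straightenability check along the chord, or the winding computation --- appears in your proposal, so it remains a correctly aimed plan rather than a proof. Finally, your worry that each linear segment lies (after translation) in a contractible unipotent subgroup is a red herring: contractibility of the pieces does not obstruct nontriviality of their concatenation, and the $\SL(2,\R)$ analogy is unnecessary once one simply carries out the explicit computation.
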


\noindent Of course, this follows from knowing that both manifolds are $S^4$ (\cite{AK1} for the linear straightening of $A_0$ and \cite{AK2}, \cite{Sig0} for the twisted straightening), but the point is to bypass \cite{AK2}, \cite{Sig0} to conclude that many Cappell-Shaneson spheres are standard using only the relatively simple Kirby calculus argument of \cite{AK1}.

\begin{proof}
Starting with the Cappell-Shaneson matrix $A$ given below, apply  Theorem~\ref{twist} four times as shown:
$$A=\left[ \begin{array}{ccc} 0 & -1 & -2 \\ 0 & -1 & -3 \\ 1 & 2 & 5 \end{array} \right] \mapsto
 \left[ \begin{array}{ccc} 0 & 1 & 4 \\ 0 & -1 & -3 \\ 1 & 0 & -1 \end{array} \right] \mapsto
\left[ \begin{array}{ccc} 0 & 1 & 0 \\ 0 & -1 & 1 \\ 1 & 0 & 1 \end{array} \right] \mapsto
\left[ \begin{array}{ccc} 0 & 1 & 0 \\ 0 & 1 & 1 \\ 1 & 0 & 1 \end{array} \right] \mapsto
\left[ \begin{array}{ccc} 0 & -1 & -2 \\ 0 & 1 & 1 \\ 1 & 2 & 3 \end{array} \right]=B.$$
The respective moves are left multiplication by $\Delta^2$, right multiplication by $\Delta_0^2$ (note that the second column has the required form), right multiplication by $\Delta^2$, and left multiplication by $\Delta^2$. The final matrix $B$ is immediately preceded by $A_0$, so it now suffices to show that the two manifolds $X^\sigma_B$ are diffeomorphic. Since each move comes from Theorem~\ref{twist}, our previous discussion gives a diffeomorphism from $X^\sigma_A$ to $X^{\tau \cdot\sigma}_B$ for each straightening $\sigma$ of $A$, where $\tau$ is the concatenation of the linear paths between consecutive matrices above. On the other hand, $A$ and $B$ are conjugate since $\tr(A)=\tr(B)=4$. In fact, $B=CAC^{-1}$ where
$$C=\left[ \begin{array}{ccc} 2 & 1 & 2 \\ 0 & -1 & -1 \\ -1 & 0 & -1 \end{array} \right],\ \ C^{-1}=\left[ \begin{array}{ccc} 1 & 1 & 1 \\ 1 & 0 & 2 \\ -1 & -1 & -2 \end{array} \right].$$
Let $\sigma_A$ and $\sigma_B$ denote the respective linear straightenings of $A$ and $B$ (which exist since $\tr(A),\tr(B)\ge0$). Since conjugation preserves linear straightenings, we obtain a diffeomorphism  from $X^{\sigma_A}_A$ to $X^{\sigma_B}_B$. Thus, $X^{\sigma_B}_B$ is diffeomorphic to $X^{\tau \cdot\sigma_A}_B$, and it suffices to show that the 1-cycle $\tau \cdot\sigma_A\cdot\sigma_B^{-1}$ (where the last path is inverted, not the individual matrices) is nontrivial in $H_1(\GL^+(3,\R))=\pi_1(\GL(3,\R))=\zed/2$.

To analyze this 1-cycle, consider the linear path $\rho$ between $A$ and $B$, given by the family of matrices $B_t=tB+(1-t)A$, $0\le t\le 1$. These matrices all lie in $\GL^+(3,\R)$ and can be linearly straightened. This follows as in the proof of Proposition~\ref{straight}, by verifying that for $s\ge0$, $0\ne\det(B_t+sI)=s^3+4s^2+[4t(1-t)+3]s+1$, where the last expression arises by direct calculation and is clearly positive for $s\ge0$, $0\le t \le 1$. The linear straightenings comprise a 2-simplex in $\GL^+(3,\R)$ with edges $\sigma_A$, $\sigma_B$ and $\rho$. Thus, the desired 1-cycle $\tau \cdot\sigma_A\cdot\sigma_B^{-1}$ is homologous in $\GL^+(3,\R)$ to $\tau \cdot\rho$, which is obtained by linearly connecting the above five matrices into a 1-cycle in the given cyclic order. To compute its homology class, we deformation retract $\GL^+(3,\R)\to\SO(3)$ by the Gram-Schmidt procedure. Each matrix in $\tau \cdot\rho$ has first column $[0\ 0\ 1]^T$, which is unchanged by the Gram-Schmidt procedure. The second column $[a\ c\ e]^T$ becomes $[a\ c\ 0]^T$ up to positive scale, and the third column can be ignored since it is uniquely determined by the first two. It now suffices to compute the mod 2 winding number of $(a,c)$ in $\R^2$, but this is nonzero by inspection.
\end{proof}


\end{document}